\newtheorem{theorem}{Theorem}
\newtheorem*{theorem*}{Theorem}
\newtheorem{lemma}[theorem]{Lemma}
\newtheorem*{lemma*}{Lemma}
\newtheorem{corollary}[theorem]{Corollary}
\newtheorem*{fact*}{Fact}
\newtheorem{rremark}{Remark}
\def\R{\mathbb R}
\def\C{\mathbb C}
\def\w{\omega}
\begin{document}

\title[Stability of travelling waves]{Stability of travelling wave solutions to the sine-{G}ordon equation }
\author{ C.K.R.T.\ Jones \& R. Marangell}
\begin{abstract} We give a geometric proof of spectral stability of travelling kink wave solutions to the sine-Gordon equation. For a travelling kink wave solution of speed $c \neq \pm 1$, the wave is spectrally stable. The proof uses the Maslov index as a means for determining the lack of real eigenvalues. Ricatti equations and further geometric considerations are also used in establishing stability. 
\end{abstract}

\maketitle{}
\section{Introduction }

The sine-Gordon equation: 
\begin{equation}\label{eq:gov}
u_{tt} = u_{xx} + \sin u
\end{equation}
\noindent has applications in many areas of physics and mathematics. It can be used to model magnetic flux propagation in long Josephson junctions: two ideal superconductors separated by a thin insulating layer \cite{asfcsr76}, \cite{derksetal03}. It can be thought of a model for mechanical vibrations of the so-called `ribbon pendulum' - the continuum limit of a line of pendula each coupled to their nearest neighbor via Hooke's law \cite{rbpm08}.  In biology, it has found applications in modeling the transcription and denaturation in DNA molecules \cite{msalerno91}. Further, it can be used to model propagation of a crystal dislocation, Bloch wall motion of magnetic crystals, propagation of ``splay-wave'' along a lipid membrane, and pseudo-spherical surfaces to name a few others (see \cite{asfcdm73} and the references therein).

In this paper, we consider solutions  of the form $v(x+ct,t)$ where $c$ is the (positive) speed of the 
traveling wave. Making the change of variable $z = x+ ct$ and substituting 
into (\ref{eq:gov}) gives
\begin{equation} \label{eq:trav}
c^2 v_{zz} + 2c v_{zt} + v_{tt} = v_{zz} + \sin v.
\end{equation}
A {\em travelling wave} solution will be a $t$ independent solution $v(z)$ to 
(\ref{eq:trav}). Thus it solves the (nonlinear) pendulum equation: 
\begin{equation}\label{eq:pend}
(c^2-1)v_{zz} = \sin v.
\end{equation}
A {\em kink wave}  solution is a travelling wave solution to (\ref{eq:trav}) corresponding to a heteroclinic orbit in the phase plane of (\ref{eq:pend}). 

Stability of a singularly perturbed kink wave solution was shown in \cite{derksetal03}. In \cite{rbpm08}, the Cauchy problem for the sine-Gordon equation in laboratory coordinates was studied using inverse scattering techniques. In this paper we take a more geometric approach determining the spectral stability of kink-wave solutions to equation (\ref{eq:gov}) via geometric considerations and elementary methods of ODE theory. 

\subsection{The Maslov index}

Note: In this paper we will use the description of the Maslov index as in \cite{rs93}. 

Let the matrix $J = \begin{pmatrix} 0 & -1 \\ 1 & 0 \end{pmatrix}$ denote the standard symplectic structure on $\R^2$. A line $\ell$ passing through the origin is 
considered Lagrangian in the sense that for any $v_1, v_2 \in \ell$ the inner product of $v_1$ with $J v_2$, $<v_1, J v_2> = 0$. Let $\ell(t) $ be a curve of lines in $\R^2$. If $\ell(t)$ can be written as the span of the $2 \times 1$ matrix $\begin{pmatrix} x(t) \\ y(t) \end{pmatrix}$, we will call the functions $x(t)$ and $y(t)$ a {\em frame} for $\ell(t)$. Alternatively we can view $\ell(t)$ as a curve in $\R P^1 \approx S^1$, and if $\begin{pmatrix} x(t) \\ y(t) \end{pmatrix}$ is a frame of $\ell$, then $\ell(t) = [x(t):y(t)]$.  Now let ${\bf a} = [a_1: a_2]$ be a fixed line in $\R^2$. Suppose that $\ell(t): [t_0, t_n] \to \R P^1$, and $\ell(t) = {\bf a}$ at $t_1, t_2, \ldots t_{n-1}$ 
with $t_0 < t_1 < t_2 < \cdots < t_{n-1} < t_n$. Suppose further that we have $\dot{\ell}(t_i) \neq (0,0)$ for all $t_i$. We define the {\em crossing form}, $\Gamma(\ell(t), {\bf a},t_i)$ of 
$\ell(t)$, with respect to ${\bf a}$, at $t_i$ as:
\begin{equation}\label{eq:cross}
\Gamma(\ell(t), {\bf a},t_i) = x(t_i)\dot{y}(t_i) - y(t_i)\dot{x}(t_i).
\end{equation}
\noindent For a curve $\ell(t)$ in $\R P^1$ as above, we define the {\em Maslov index}, $\mu(\ell(t), {\bf a})$, as: 
\begin{equation}\label{eq:mas}
\mu(\ell(t),{ \bf a}) = \sum_{t_i} \textrm{sign }(\Gamma(\ell(t), {\bf a}).
\end{equation}
The Maslov index defined in this way is a signed count of the number of times that $\ell(t) $, viewed as a curve in $S^1$ crosses the point {\bf a}.

\section{Kink Waves}
Travelling wave solutions to the sine-Gordon equation for which the quantity $c^2 -1 <0$ are called {\em subluminal} waves. When $c^2-1>0$ they are called {\em superluminal} waves. We have the following theorem: 

\begin{theorem} \label{th:main} Kink wave solutions to equation (\ref{eq:gov}) $u_{tt} = u_{xx} +\sin u$, are spectrally stable if $c^2\neq 1$.
\end{theorem}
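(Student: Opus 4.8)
The plan is to linearize \eqref{eq:trav} about a kink $\hat v(z)$ and to split spectral stability into two essentially independent questions: locating the essential spectrum, and excluding eigenvalues with $\mathrm{Re}\,\lambda>0$. The latter I further split into the real axis, handled via the Maslov index introduced above, and the rest of the half-plane, handled by Riccati equations. Writing $u=\hat v(z)+w(z,t)$ in \eqref{eq:trav} and taking $w=e^{\lambda t}p(z)$ gives the quadratic eigenvalue problem
\begin{equation*}
(c^2-1)\,p'' + 2c\lambda\,p' + (\lambda^2-\cos\hat v)\,p = 0,
\end{equation*}
which I rewrite as a first-order system $P'=A(z;\lambda)P$ on $\R^2$ with $A(z;\lambda)\to A^{\pm}_\infty(\lambda)$ as $z\to\pm\infty$; here $\cos\hat v\to q_\infty$ at both ends of the heteroclinic, with $q_\infty=-1$ for subluminal and $q_\infty=+1$ for superluminal kinks. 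The substitution $p=e^{-\frac{c\lambda}{c^2-1}z}q$ removes the first-order term and, for real $\lambda$, recasts the problem in self-adjoint Schr\"odinger form $\mathcal Lq=\Lambda q$ with $\mathcal Lq=-q''+\frac{\cos\hat v}{c^2-1}q$ and $\Lambda=-\lambda^2/(c^2-1)^2\le 0$. Since $\hat v'$ solves $\mathcal L\hat v'=0$ (differentiate \eqref{eq:pend}) and, being the derivative of a monotone heteroclinic, has no zeros, $\hat v'$ is the ground state of $\mathcal L$ and $\mathcal L\ge 0$; this both exhibits the translational eigenvalue at $\lambda=0$ and motivates the real-axis argument. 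Because the reduction is $\lambda$-dependent (it changes the function space), the rigorous argument on the real axis will go through the Maslov index rather than through self-adjointness.

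For the real axis I argue as follows. For real $\lambda$ outside the essential spectrum, $A^{\pm}_\infty(\lambda)$ are hyperbolic, and I view the solution of $P'=A(z;\lambda)P$ that decays at $z=-\infty$ as a curve $\ell(z;\lambda)$ in $\R P^1$; an eigenvalue occurs precisely when $\ell(+\infty;\lambda)$ equals the stable line $\mathbf a(\lambda)$ of $A^{+}_\infty(\lambda)$. For real $\lambda$ sufficiently large this cannot happen — for superluminal kinks because the asymptotic Morse index drops to $0$ beyond $\lambda=1$, and for subluminal kinks because $\mathcal L\ge 0$ forbids $\Lambda=-\lambda^2/(c^2-1)^2$ from being in the spectrum — so it suffices to show that no crossing is created as $\lambda$ decreases to $0$. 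Forming the Maslov box in the $(z,\lambda)$-rectangle and invoking homotopy invariance, together with a computation of the crossing form \eqref{eq:cross} showing that every crossing along the real $\lambda$-axis is regular with one fixed sign (a Sturm-type monotonicity in $\lambda$), forces $\mu(\ell(\cdot;\lambda),\mathbf a(\lambda))\equiv 0$ for $\lambda>0$; the limit $\lambda\to 0^{+}$ recovers exactly the edge eigenfunction $\hat v'$. Hence there is no eigenvalue with $\lambda$ real and positive.

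For the remaining part of the right half-plane I pass to the Riccati variable $r=p'/p$, which satisfies $r'=-r^2-\frac{2c\lambda}{c^2-1}r-\frac{\lambda^2-\cos\hat v}{c^2-1}$; an eigenvalue with $\mathrm{Re}\,\lambda>0$ would force the forward Riccati solution anchored to the unstable subspace at $z=-\infty$ to meet the backward one anchored at $z=+\infty$. I will exhibit a region in the complex $r$-plane that is forward-invariant for the first flow and backward-invariant and disjoint for the second whenever $\mathrm{Re}\,\lambda>0$ and $\lambda$ is not real, ruling this out; a preliminary multiplier identity — multiply the eigenvalue equation by $\bar p$ and integrate over $\R$, then separate real and imaginary parts — confines any such $\lambda$ to a compact set and supplies the quantitative input. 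Together with the previous step this empties $\{\mathrm{Re}\,\lambda>0\}$ of point spectrum. It then remains to compute the essential spectrum from $A^{\pm}_\infty(\lambda)$: for subluminal kinks it lies on the imaginary axis and stability follows at once, while for superluminal kinks it intrudes into $\{\mathrm{Re}\,\lambda>0\}$, and I choose a constant exponential weight $e^{az}$ returning it to $\{\mathrm{Re}\,\lambda\le 0\}$ while checking that this weight does not push point spectrum across the imaginary axis — the Maslov and Riccati computations, being intrinsic to the ODE flow, are insensitive to it. This yields spectral stability whenever $c^2\ne 1$.

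The principal obstacle is the superluminal case. Because the essential spectrum genuinely enters the right half-plane, stability can only be meant in an exponentially weighted space, and one must verify both that an admissible weight clears the essential spectrum off the closed right half-plane and that it creates no new right-half-plane eigenvalues; only then do the weight-independent Maslov and Riccati arguments certify stability. A secondary difficulty is making the Maslov count fully rigorous — confirming that every crossing along the real $\lambda$-axis is regular (nonvanishing crossing form) and handling with care the degenerate edge at $\lambda=0$, where $\hat v'$ sits at the boundary between point and essential spectrum and where the $\lambda$-dependent reduction of the first step degenerates.
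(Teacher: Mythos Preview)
For the subluminal case your plan aligns with the paper's: Maslov index for real $\lambda$, Riccati equations for complex $\lambda$. The paper's execution is more concrete in two places. First, instead of a Maslov box and homotopy invariance, the paper computes the crossing form of $\ell(z)$ relative to $\xi^s$ explicitly and finds it equals $\frac{(\cos\hat v+1)\,w_1^2}{c^2-1}$, which is \emph{independent of $\lambda$} and strictly negative for $c^2<1$; this single identity delivers regularity and fixed sign at once, and the implicit function theorem then tracks each crossing out to $\lambda\to+\infty$, where there are none. The count at $\lambda=0$ is shown to be zero by identifying $\ell(z;0)$ with the tangent line to the pendulum heteroclinic and checking it never aligns with $\xi^s$ for finite $z$. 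Second, for complex $\lambda$ the paper's invariant region is simply the upper half of the $\eta$-plane after the Liouville substitution: one checks $\beta'\big|_{\beta=0}=\frac{2pq}{(c^2-1)^2}>0$ and that $\eta_u,\eta_s$ lie on opposite sides of the real axis, then repeats on the other chart of $\C P^1$. No multiplier identity or compactness step is needed. Your Schr\"odinger reduction and ground-state observation about $\hat v'$ are sound motivation but are not used in the paper's argument.

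The superluminal case is where you and the paper genuinely diverge. You correctly flag that the essential spectrum enters $\{\mathrm{Re}\,\lambda>0\}$ and propose an exponential weight, planning to rerun the Maslov/Riccati machinery in the weighted space --- a legitimate program, but one you acknowledge as the principal obstacle and do not complete. The paper takes a much shorter route and in effect proves a weaker statement: it argues that for $c^2>1$ and $\mathrm{Re}\,\lambda>0$ the two asymptotic characteristic exponents have real parts of the \emph{same} sign, so $A_\infty(\lambda)$ has no one-dimensional unstable subspace at $z=-\infty$ and hence no heteroclinic connection can exist; thus there is no point spectrum with $\mathrm{Re}\,\lambda>0$. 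The essential spectrum is not discussed at all, so the paper's superluminal conclusion is really ``no unstable eigenvalues'' rather than full spectral stability in $L^2$. Your approach is more honest about what spectral stability should mean here, at the cost of a substantially longer argument.
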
 

\subsection{Subluminal kink waves}
Until otherwise specified, the quantity $c^2-1<0$. We will focus primarily on the orbit that satisfies the following boundary conditions, though the analysis that follows will apply to all subluminal kink waves.
Let $v(z)$ be a solution to (\ref{eq:pend}) satisfying:
\begin{equation}
\lim_{z \to -\infty} v(z) = - \pi \textrm{ and } \lim_{z \to \infty} v(z) = \pi 
\end{equation}

Linearizing equation (\ref{eq:trav}) about the kink wave solution $v$ gives: 
\begin{equation}\label{eq:lin1}
(c^2-1)\varphi_{zz} + 2c\varphi_{zt} + \varphi_{tt} = (\cos v )\varphi.
\end{equation}
 \noindent Setting $\varphi_t = \psi$ we can rewrite (\ref{eq:lin1}) as:
 \begin{equation}\label{eq:lin2}
 \begin{pmatrix} \varphi \\ \psi \end{pmatrix}_t = 
 \begin{matrix}   \psi \\ \cos v \varphi - (c^2-1)\varphi_{zz} - 2c\psi_z \end{matrix} 
 \end{equation}
 \noindent In order to establish spectral stability, we need to consider the eigenvalue problem of equation (\ref{eq:lin2}). Letting $\lambda$ be the eigenvalue parameter, we obtain:
 \begin{equation}\label{eq:lin3} 
 \begin{matrix}   \psi \\ (\cos v) \varphi - (c^2-1)\varphi_{zz} - 2c\psi_z \end{matrix}  = \lambda \begin{pmatrix} \varphi \\ \psi \end{pmatrix} 
 \end{equation}
 We will consider (twice differentiable) $L^2$ perturbations, $\varphi$. This leads to the following eigenvalue condition: A function $\varphi$ is an eigenfunction of the linearized operator with eigenvalue $\lambda$ if 
 \begin{equation}\label{eq:mainode}
 (c^2-1) \varphi_{zz} + 2c\lambda \varphi_z + (\lambda^2 - \cos v) \varphi = 0,
 \end{equation} 
 is satisfied, together with the condition that 
 $$ \lim_{z \to \pm \infty} \varphi(z) = 0$$

The idea now is to reformulate the above eigenvalue condition in an equivalent, geometric way. Setting ${\bf w } = (w_1, w_2)$, with $w_1 = \varphi$, $w_2 = \varphi_z$ and $' = \frac{d}{dz}$, we have:
\begin{equation} \label{eq:odesys} {\bf w}' = 
\begin{pmatrix} w_ 1\\ w_2 \end{pmatrix}' = \begin{pmatrix} 0 & 1 \\ \frac{\cos v - \lambda^2}{c^2-1} & \frac{-2 c \lambda}{c^2-1} \end{pmatrix} \begin{pmatrix} w_1 \\ w_2 
\end{pmatrix} =: A(\lambda, z) {\bf w }. 
\end{equation}
\noindent Also we set 
\begin{equation}\label{eq:lima}
A(\lambda) := \lim_{z\to \pm \infty} A(\lambda,z) = \begin{pmatrix} 0 & 1 \\ \frac{- 1 - \lambda^2}{c^2-1} & \frac{-2 c \lambda}{c^2-1} \end{pmatrix}.
\end{equation}

We remark that $A(\lambda)$ has an unstable and a stable subspace which we will denote by $\xi^u$ and $\xi^s$ respectively.

\begin{lemma}[Geometric version of the eigenvalue condition]\label{lem:geom}
Given that $\displaystyle \begin{pmatrix} w_1 \\ w_2 \end{pmatrix}$ solves (\ref{eq:odesys})
\begin{equation}
\lim_{z\to \pm \infty} p_1 = 0 
\end{equation}
is equivalent to 
 \begin{equation}
\lim_{z\to -\infty} \begin{pmatrix} w_1 \\ w_2 \end{pmatrix} 
\to \xi^u  \textrm{, and}   \lim_{z \to \infty} \begin{pmatrix} w_1 \\ w_2 \end{pmatrix} \to \xi^s 
\end{equation}
\end{lemma}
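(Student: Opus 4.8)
The plan is to argue that the boundary-decay condition $\lim_{z\to\pm\infty} w_1 = 0$ for a solution of the asymptotically-autonomous linear system \eqref{eq:odesys} is, under the hypothesis that $\lambda$ is not in the essential spectrum, equivalent to the solution being asymptotic to the relevant invariant subspaces of the limiting matrix $A(\lambda)$ in \eqref{eq:lima}. The key background fact I would invoke is the standard asymptotic theory for linear systems $\mathbf w' = A(\lambda,z)\mathbf w$ whose coefficient matrix converges (exponentially, since $v$ approaches its limits exponentially along the heteroclinic) to a constant matrix $A(\lambda)$: the solution space near $z=+\infty$ splits into a piece decaying like the stable eigenvalue of $A(\lambda)$ and a piece growing like the unstable one, and symmetrically at $z=-\infty$. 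So first I would record that, for $\lambda$ outside the essential spectrum, the eigenvalues $\mu^\pm(\lambda)$ of $A(\lambda)$ satisfy $\operatorname{Re}\mu^- < 0 < \operatorname{Re}\mu^+$, so that $A(\lambda)$ genuinely has a one-dimensional stable line $\xi^s$ and a one-dimensional unstable line $\xi^u$; this is where I would check that the characteristic polynomial $(c^2-1)\mu^2 + 2c\lambda\mu + (\lambda^2+1) = 0$ has no purely imaginary root, i.e. that $\lambda$ avoids the continuous spectrum curve.

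Next, for the forward direction, suppose $w_1(z)\to 0$ as $z\to+\infty$. By the asymptotic theory, $\mathbf w(z) = a\, e^{\mu^- z}(\mathbf v^- + o(1)) + b\, e^{\mu^+ z}(\mathbf v^+ + o(1))$ for constants $a,b$, where $\mathbf v^\pm$ are the eigenvectors spanning $\xi^s,\xi^u$; here I use that no eigenvalue is purely imaginary so the two terms are genuinely of different exponential type. If $b\neq 0$ the first component grows (or at least fails to decay) like $e^{\operatorname{Re}\mu^+ z}$, contradicting $w_1\to 0$ — I would note that the first component of $\mathbf v^+$ cannot vanish, since an eigenvector of the companion-type matrix $A(\lambda)$ with zero first entry would force the second entry to be zero too. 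Hence $b=0$, so $\mathbf w(z)\to \xi^s$ projectively (the nonzero solution is, to leading order, a multiple of $e^{\mu^- z}\mathbf v^-$). The same argument at $z=-\infty$, with the roles of stable and unstable reversed, gives $\mathbf w(z)\to\xi^u$. For the converse, if $\mathbf w(z)\to\xi^s$ as $z\to+\infty$ then $\mathbf w(z) = a e^{\mu^- z}(\mathbf v^- + o(1))$ with $\operatorname{Re}\mu^- < 0$, so both components, in particular $w_1=\varphi$, decay exponentially to $0$; likewise at $-\infty$ using $\operatorname{Re}\mu^+>0$ running $z\to-\infty$. This closes the equivalence.

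The main obstacle — really the only substantive point — is justifying that membership in the stable/unstable subspace of the \emph{constant-coefficient limit} $A(\lambda)$ controls the \emph{actual} decay of solutions of the $z$-dependent system, and doing so uniformly enough that "$w_1\to 0$" cannot sneak through via some intermediate behavior. This is handled by the Gap Lemma / Levinson-type asymptotic integration results, which apply precisely because $A(\lambda,z)\to A(\lambda)$ at an exponential rate (inherited from the exponential convergence of $v(z)$ to $\pm\pi$ as a heteroclinic of the pendulum equation \eqref{eq:pend}) and because, off the essential spectrum, there is a spectral gap $\operatorname{Re}\mu^- < 0 < \operatorname{Re}\mu^+$. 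A secondary point to be careful about is that the statement is vacuous unless $\lambda$ is assumed to lie outside the essential spectrum — for $\lambda$ on the spectral curve, $A(\lambda)$ has imaginary eigenvalues and the "stable/unstable" dichotomy degenerates — so I would state the lemma's implicit hypothesis that $\operatorname{Re}\mu^\pm(\lambda)\neq 0$, which is automatic for the real and right-half-plane $\lambda$ that matter for the stability argument. Everything else is the routine linear-algebra check that the companion matrices $A(\lambda,z)$ and $A(\lambda)$ have no eigenvector with vanishing first coordinate.
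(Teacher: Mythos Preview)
Your argument is correct, but it proceeds along a different line from the paper's. The paper does not invoke Levinson-type asymptotic integration or the Gap Lemma at all; instead it compactifies the independent variable via $z=\tan(\pi\tau/2)$, projectivizes each fibre $\R^2$ to $\R P^1\cong S^1$, and thereby obtains a continuous flow on the closed cylinder $S^1\times[-1,1]$. On the boundary circles $\tau=\pm1$ the projectivized constant-coefficient flow of $A(\lambda)$ has $\xi^u$ and $\xi^s$ as fixed points, and the equivalence is then read off from the uniqueness of the one-dimensional stable and unstable manifolds of those fixed points in the compactified phase space. Your approach, by contrast, writes every solution near $z=+\infty$ explicitly as $a\,e^{\mu^-z}(\mathbf v^-+o(1))+b\,e^{\mu^+z}(\mathbf v^++o(1))$ and argues directly on the coefficients, using the companion-matrix observation that no eigenvector has vanishing first entry. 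Both routes are valid here because $v(z)\to\pm\pi$ exponentially. The paper's geometric argument is more in keeping with its overall Maslov-index viewpoint and sidesteps any explicit asymptotic formulae; your analytic argument is more self-contained, yields the actual exponential decay rates, and makes explicit the hypothesis (left implicit in the paper) that $\lambda$ must lie off the essential spectrum so that $\operatorname{Re}\mu^-<0<\operatorname{Re}\mu^+$ and the dichotomy is genuine.
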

\begin{proof}
 We compactify the extended phase plane of (\ref{eq:odesys}) by introducing a new variable $z = z(\tau) = \tan(\frac{\pi \tau}{2})$, where $\tau \in (-1,1)$. Now the extended system of ODE's becomes the (autonomous)
 \begin{equation}\label{eq:extendedode}
  \begin{array}{ccc}{\bf w'} & =& A(\tau,\lambda) {\bf w} \\
 \tau' & =  &\frac{2}{\pi}\cos^2{\frac{\pi \tau}{2}} \end{array}
  \end{equation} 
 Next we note that if we view the space $\R^2 \times (-1,1)$ as a plane bundle over the segment $[-1,1]$, the linearity of (\ref{eq:odesys}) means that linear subspaces of the fibres are preserved, so the flow defined by (\ref{eq:extendedode}) on $\R^2 \times (-1,1)$ induces a well defined flow on the cylinder $S^1 \times (-1,1)$. Moreover, this flow can be continuously extended to a flow on $S^1 \times [-1,1]. $ Finally we note that on this cylinder, the points $\xi^u$ and $\xi^s$ are fixed points of the induced flow, with a one dimensional unstable and stable manifold respectively. The uniqueness of the stable and unstable manifolds concludes the proof of the lemma. 
 \end{proof}

We have redefined the original eigenvalue condition as the existence of a heteroclinic orbit joining $\xi^u$ and $\xi^s$.

Now, as $c^2 -1<0$, the matrix $$A(\lambda, z) \to A(\lambda) = \begin{pmatrix} 0 & 1 \\ \frac{-1 - \lambda^2}{c^2-1} & \frac{-2c\lambda}{c^2-1} \end{pmatrix} $$ has eigenvalues $\gamma_u$ and $\gamma_s$ corresponding to the unstable and stable subspaces.
\begin{equation} \label{eq:eigenvalueA}
\gamma_u = \frac{-c\lambda - \sqrt{\lambda^2 - (c^2-1)}}{c^2-1} \textrm{ , and } \gamma_s = \frac{-c\lambda + \sqrt{\lambda^2 - (c^2-1)}}{c^2-1}.
\end{equation}
\noindent and $(c^2-1)\gamma_{u,s}^2 +2c\lambda \gamma_{u,s} +(\lambda^2+1) = 0$. Observe:
\begin{equation}\label{eq:eigenvectorsA}
\xi^u = < \begin{pmatrix} \frac{-c\lambda + \sqrt{\lambda^2 - (c^2-1)} }{\lambda^2 + 1} \\ 1 \end{pmatrix} >\textrm{ , and } 
\xi^s = < \begin{pmatrix} \frac{-c\lambda - \sqrt{\lambda^2 - (c^2-1)}}{\lambda^2 + 1} \\ 1 \end{pmatrix}>.
\end{equation}
\noindent where $< \begin{pmatrix} a \\ b \end{pmatrix} >$ denotes the linear space spanned by the vector $ \begin{pmatrix} a \\ b \end{pmatrix}$.  For each fixed $\lambda$, let $\ell(z)$ be the set of lines in $\R^2$ that tend to the unstable subspace of $A$ at $- \infty$, under the flow of (\ref{eq:odesys}), that is:
\begin{equation}\label{eq:unstab}
\ell(z) = \bigg{\{} <\begin{pmatrix} w_1 \\ w_2 \end{pmatrix} > | \begin{pmatrix} w_1 \\ w_2 \end{pmatrix} \textrm{ solves (\ref{eq:mainode}) and } \to \xi^u , \textrm{ as } z 
\to - \infty \bigg{\}}
\end{equation}
\noindent
The lines $\ell(z)$ are Lagrangian, and in fact the representation $\begin{pmatrix} w_1 \\ w_2 \end{pmatrix}$ is a frame for the line $\ell$, so we can define the
crossing form relative to the subspace $\xi^s$, $\Gamma(\ell(z), \xi^s,  \lambda, z)$ as $\Gamma(\ell(z), \xi^s,  \lambda, z) =   w_1w_2' - w_2w_1'$. Substituting as 
in equation (\ref{eq:odesys}) gives 
\begin{equation}\label{eq:cross1}
\Gamma(\ell(z), \xi^s,  \lambda, z) = \frac{\cos v - \lambda^2}{c^2-1} w_1^2 - \frac{2 c \lambda}{c^2-1} w_1 w_2 - w_2^2
\end{equation}
\begin{lemma}\label{lem:main} The crossing form of $\ell(z)$, relative to the stable subspace at infinity, $\Gamma(\ell(z), \xi^s,  \lambda, z)$ defined above is independent of $\lambda$. 
\end{lemma}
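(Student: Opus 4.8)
The plan is to evaluate the crossing form $\Gamma(\ell(z),\xi^s,\lambda,z)$ at an actual crossing and watch the $\lambda$-dependence disappear. By definition a crossing occurs at a point $z=z_*$ where $\ell(z_*)=\xi^s$. Since $\xi^s$ is the stable eigenspace of the constant matrix $A(\lambda)$, by (\ref{eq:eigenvectorsA}) --- equivalently, by the characteristic relation $(c^2-1)\gamma_{u,s}^2+2c\lambda\gamma_{u,s}+(\lambda^2+1)=0$ --- any frame $\begin{pmatrix} w_1 \\ w_2\end{pmatrix}$ for $\ell(z_*)$ must satisfy $w_2(z_*)=\gamma_s\,w_1(z_*)$, and necessarily $w_1(z_*)\neq 0$, for otherwise the frame vector would vanish. (For subluminal speeds $c^2-1<0$ the discriminant $\lambda^2-(c^2-1)$ is positive for every real $\lambda$, so $\gamma_s$ is real and $\xi^s$ is a genuine line for all $\lambda$ in the relevant range.)

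Substituting $w_2(z_*)=\gamma_s w_1(z_*)$ into (\ref{eq:cross1}) and factoring out $w_1(z_*)^2$ gives
\begin{equation}
\Gamma(\ell(z_*),\xi^s,\lambda,z_*)=w_1(z_*)^2\left(\frac{\cos v-\lambda^2}{c^2-1}-\frac{2c\lambda}{c^2-1}\gamma_s-\gamma_s^2\right).
\end{equation}
Now use the characteristic relation in the form $\gamma_s^2=-\dfrac{2c\lambda\gamma_s+(\lambda^2+1)}{c^2-1}$ to eliminate $\gamma_s^2$; the $2c\lambda\gamma_s$ terms cancel and the $\lambda^2$ terms cancel, leaving
\begin{equation}
\Gamma(\ell(z_*),\xi^s,\lambda,z_*)=\frac{1+\cos v(z_*)}{c^2-1}\,w_1(z_*)^2 .
\end{equation}
The right-hand side carries no explicit $\lambda$, which is the content of the lemma; in particular, since $1+\cos v\ge 0$, $w_1(z_*)^2>0$ and $c^2-1<0$, the crossing form is \emph{strictly negative} at every crossing, regardless of the value of $\lambda$.

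I do not expect a real obstacle here: the whole argument is one application of the characteristic equation once the crossing condition $w_2=\gamma_s w_1$ has been recorded. The only points needing care are (i) checking that $w_1$ does not vanish at a crossing, so the factorization is legitimate, and (ii) keeping in mind that the statement is meaningful --- and needed --- only for real $\lambda$. It is worth noting that the real upshot, namely that the crossing form has a single fixed sign independent of $\lambda$, is precisely what forces the Maslov index $\mu(\ell(z),\xi^s)$ to be monotone in $z$, the feature that will be exploited later to rule out real eigenvalues.
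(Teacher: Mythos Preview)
Your proof is correct and follows essentially the same route as the paper: substitute the crossing condition $w_2=\gamma_s w_1$ into the quadratic expression (\ref{eq:cross1}) and use the characteristic relation $(c^2-1)\gamma_s^2+2c\lambda\gamma_s+(\lambda^2+1)=0$ to collapse the $\lambda$-dependence, arriving at $\Gamma=\dfrac{(1+\cos v)\,w_1^2}{c^2-1}$. Your algebra is in fact tidier than the paper's (which substitutes the reciprocal form $w_2=\dfrac{\lambda^2+1}{\gamma_s(c^2-1)}w_1$ and manipulates a larger fraction), and your explicit remarks that $w_1(z_*)\neq 0$ and that $\gamma_s$ is real for subluminal speeds are welcome points of hygiene the paper leaves implicit.
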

\begin{proof}
To evaluate the crossing form on $\xi^s$, we use the fact that on $ \xi^s$ we have that $w_2 = \frac{\lambda^2 + 1} {-c\lambda + \sqrt{\lambda^2 - (c^2-1)} }w_1 = \frac{(\lambda^2+1)}{\gamma_s(c^2-1)} w_1$. This gives 
\begin{eqnarray*}
\Gamma & = & \w_1^2 \Bigg{[} \frac{\cos v - \lambda^2}{c^2-1} - \frac{2 c \lambda}{c^2-1} \frac{(\lambda^2+1)}{\gamma_s(c^2-1)} - \bigg{(}\frac{(\lambda^2+1)}{\gamma_s(c^2-1)}\bigg{)}^2 \Bigg{]} \\
& = & \frac{\w_1^2}{(c^2-1)} \Bigg{[}\frac{(\cos v - \lambda^2)\gamma_s^2(c^2-1) - 2 c \lambda (\lambda^2 +1) \gamma_s - (\lambda^2+1)^2}{\gamma_s^2(c^2-1)}\Bigg{]} \\
& = & \frac{\w_1^2}{(c^2-1)} \Bigg{[}\frac{\cos v\gamma_s^2(c^2-1) - (\lambda^2)\gamma_s^2(c^2-1) - 2 c \lambda (\lambda^2 +1) \gamma_s - (\lambda^2+1)^2}{\gamma_s^2(c^2-1)}\Bigg{]} \\
& = & \frac{\w_1^2}{(c^2-1)} \Bigg{[}\frac{\cos v\gamma_s^2(c^2-1) - (\lambda^2)\gamma_s^2(c^2-1) - 2 c \lambda (\lambda^2 +1) \gamma_s - (\lambda^2+1)^2}{\gamma_s^2(c^2-1)}\Bigg{]} \\
& & + \frac{\w_1^2}{(c^2-1)} \Bigg{[} \frac{-\gamma_s^2(c^2-1) + \gamma_s^2(c^2-1)}{\gamma_s^2(c^2-1)}\Bigg{]} \\ 
& = &\frac{\w_1^2}{(c^2-1)} \Bigg{[}\frac{(\cos v +1)\gamma_s^2(c^2-1)-(\lambda^2+1)\big{(}(c^2-1)\gamma_s^2 +2c\lambda \gamma_s + (\lambda^2+1)\big{)}}{\gamma_s^2(c^2-1)}\Bigg{]} \\
& = &\frac{\w_1^2}{(c^2-1)} \Bigg{[}\frac{(\cos v +1)\gamma_s^2(c^2-1)}{\gamma_s^2(c^2-1)}\Bigg{]} 
\end{eqnarray*}
\noindent because $(c^2-1)\gamma_s^2 - 2 c \lambda \gamma_s + (\lambda^2 +1) = 0$, so we have that 
\begin{equation} \label{eq:gamind}
\Gamma(\ell(z), \xi^s,  \lambda, z) = \frac{(\cos v +1)\w_1^2}{(c^2-1)} 
\end{equation}
\noindent Which shows that $\Gamma$ is independant of $\lambda$.
\end{proof} 

Evaluating the limits and using lemma \ref{lem:main} we get that the Maslov index of the curve of Lagrangian subspaces $\ell(z)$, with respect to the stable subspace at infinty is: 
\begin{align}\label{eq:mas2}
  \mu(\ell(z), \xi^s, \lambda) & = \sum_{z} \textrm{ sign } \Gamma(\ell(z), \xi^s, \lambda, z) \\
  & = -  \# \textrm{ of crossings that occur. }
\end{align}

We next observe that only regular crossings occur. A crossing is called {\em regular} if $\dot{\ell(z)}$ is non-zero. Equation (\ref{eq:gamind}) is clearly 
non-zero, except at $z = \pm \infty$. 

At a regular crossing we have that $\ell(z,\lambda) = \begin{pmatrix} w_1 \\ w_2 \end{pmatrix} = \xi^s$. The function $f(z, \lambda) = \frac{w_2}{w_1}$, is well defined at a regular crossing of the unstable manifold $\ell(z)$ with $\xi^s$. Moreover  at a regular crossing we have that 
\begin{equation}\label{eq:ift1}
f(z,\lambda) =  \frac{\lambda^2 + 1}{-c\lambda + \sqrt{\lambda^2 - (c^2-1)} },
 \end{equation}
\noindent and that
\begin{equation}\label{eq:ift2}
\frac{\partial}{\partial z} f(z,\lambda) =  \frac{\Gamma(\ell(z), \xi^s, \lambda, z)}{w_1^2} \neq 0. 
\end{equation}
\noindent Thus we can use the implicit function theorem to write the location of the crossing in the $z$ variable as a function of the parameter $\lambda$, $z = z(\lambda)$, 
further because equation (\ref{eq:gamind}) is independant of $\lambda$, we can do this for all $\lambda$.

When $\lambda \gg 1 $, the system in (\ref{eq:odesys}) tends to 
\begin{equation}\label{eq:biglam}
\begin{pmatrix} w_1 \\ w_2 \end{pmatrix}' = \begin{pmatrix} 0 & 1 \\ \frac{- \lambda^2}{c^2-1} & \frac{-2c\lambda}{c^2-1} \end{pmatrix} \begin{pmatrix} w_1 \\w_2.\end{pmatrix} =: A(\lambda \gg 1) \begin{pmatrix} w_1 \\ w_2 \end{pmatrix}
\end{equation}
\noindent which has eigenvalues $\gamma_\pm = \frac{-\lambda}{c\pm1}$. Which are real and as $c^2-1<0$,one is positive and one is negative.  For large enough $\lambda$ then, the number of 
crossings of $\ell(z)$ relative to $\xi^s$ is the same as the number of crossings as for the solution of the constant coefficient equation (\ref{eq:biglam}) tending towards the unstable 
subspace of the matrix $A(\lambda \gg 1)$. Thus for large enough $\lambda$ the number of crossings of the unstable manifold $\ell(z)$ and $\xi^s$ is equal to zero. 

Now fix a $\lambda$ and a location of a crossing $z_1(\lambda)$ say. Because the number of crossings tends to zero as $\lambda$ increases, and because of the
implicit function theorem assertion above, we have that for some finite $\lambda_1$, $\displaystyle \lim_{\lambda \to \lambda_1} z_1(\lambda)  = \infty$. But this is exactly the geometric reformulation of the eigenvalue condition. Moreover, the implicit function theorem implies that we have a unique value, $z_1(\lambda)$,  for this crossing for each $\lambda$, and the locations of different crossings (say $z_2(\lambda)$) cannot intersect. Thus as $\lambda$ increases, the number of crossings will decrease monotonically, as will the Maslov index. This last paragraph is summed up in the following corollary.

\begin{corollary} \label{cor:main} If $N_1$ is the number of times $\ell(z)$ crosses $\xi^s$ when $\lambda = \lambda_1$, as $z$ ranges from $-\infty $ to $\infty$ and $N_2$ is the number of times $\ell(z)$ crosses $\xi^s$ when $\lambda = \lambda_2$, then $|N_1-N_2| = $ the number of eigenvalues of the linearized operator (\ref{eq:lin2}) in $(\lambda_1,\lambda_2)$.
\end{corollary}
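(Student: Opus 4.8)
The plan is to prove this by a monotonicity (spectral-flow) argument built on Lemma~\ref{lem:main}. The first thing I would record is that all crossings of $\ell(z)$ with $\xi^s$ carry the same sign: by \eqref{eq:gamind} the crossing form equals $\frac{(\cos v+1)w_1^2}{c^2-1}$, and since $c^2-1<0$ while $\cos v+1\ge 0$ (vanishing only in the limits $z\to\pm\infty$), every crossing is regular with sign $-1$. Viewing $\ell(z)$ as a curve in $\R P^1\approx S^1$, this says that $\ell(z)$ sweeps through the point $\xi^s$ always in the same rotational sense, so that $\mu(\ell(z),\xi^s,\lambda)=-N(\lambda)$, where $N(\lambda)$ is the total number of crossings at that value of $\lambda$; this is essentially \eqref{eq:mas2}.

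Next I would track how $N(\lambda)$ depends on $\lambda$. Transversality in $z$, \eqref{eq:ift2}, together with smoothness of $A(\lambda,z)$ in $\lambda$, lets the implicit function theorem continue each crossing as a smooth curve $z=z_i(\lambda)$ on a maximal $\lambda$-interval, the continuation breaking down only when $z_i(\lambda)\to\pm\infty$. I would then argue this can only happen at $+\infty$: since $c^2-1<0$ forces $\lambda^2-(c^2-1)>0$, the lines $\xi^u$ and $\xi^s$ are distinct for every real $\lambda$, and since $\ell(z)\to\xi^u$ as $z\to-\infty$ by \eqref{eq:unstab}, no crossing can enter or leave through $z=-\infty$. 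Moreover $A(\lambda)$ is a hyperbolic saddle at $+\infty$ (its eigenvalues $\gamma_u,\gamma_s$ are real and of opposite sign because $\gamma_u\gamma_s=\frac{\lambda^2+1}{c^2-1}<0$), so any solution not lying on $\xi^s$ at $+\infty$ has $\ell(z)\to\xi^u$ as $z\to+\infty$; this yields only finitely many crossings for each fixed $\lambda$, and combined with the fixed-sign property of the first step, distinct crossing curves are disjoint graphs over $\lambda$. Hence $N(\lambda)$ can change only when a crossing curve runs off to $z=+\infty$.

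The crux is to identify those escapes with eigenvalues. If $z_i(\lambda)\to+\infty$ as $\lambda\to\lambda_*$, then in the limit the solution generating $\ell$ tends to $\xi^u$ at $-\infty$ and to $\xi^s$ at $+\infty$, so by Lemma~\ref{lem:geom} it is an eigenfunction and $\lambda_*$ is an eigenvalue. Conversely, if $\lambda_*$ is an eigenvalue I would use the exponential dichotomy of \eqref{eq:odesys} near $z=+\infty$ together with continuous dependence on parameters to show that for $\lambda$ near $\lambda_*$ (on one side) the solution defining $\ell$ meets $\xi^s$ at a transverse crossing whose location diverges to $+\infty$ as $\lambda\to\lambda_*$. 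Thus as $\lambda$ crosses $\lambda_*$ the count $N(\lambda)$ jumps by the number of independent crossing curves limiting there, i.e.\ by the multiplicity of $\lambda_*$ as an eigenvalue, and by the fixed-sign property this jump is always in the same direction. Consequently $N(\lambda)$ is locally constant off the spectrum and varies monotonically, changing between $\lambda_1$ and $\lambda_2$ by exactly the number of eigenvalues in $(\lambda_1,\lambda_2)$ counted with multiplicity, which gives $|N_1-N_2|$ equal to that count.

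I expect the main obstacle to be precisely that last identification: one must use the hyperbolicity of the asymptotic matrix $A(\lambda)$ carefully to rule out crossings accumulating at $+\infty$ or oscillating there without a limit, so that ``a crossing curve escaping to $+\infty$'' and ``a heteroclinic orbit from $\xi^u$ to $\xi^s$'' really are in bijection, with the correct multiplicity bookkeeping. The remaining ingredients — the sign computation for the crossing form and the implicit-function-theorem continuation of crossings — are routine once Lemma~\ref{lem:main} is in hand.
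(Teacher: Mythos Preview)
Your proposal is correct and follows essentially the same approach as the paper's own argument in the paragraph preceding the corollary: the fixed negative sign of the crossing form from Lemma~\ref{lem:main}, continuation of each crossing as a smooth curve $z_i(\lambda)$ via the implicit function theorem, and identification of a crossing curve escaping to $z=+\infty$ with the geometric eigenvalue condition of Lemma~\ref{lem:geom}. If anything you are more careful than the paper, explicitly ruling out escape through $z=-\infty$ (since $\ell\to\xi^u\neq\xi^s$ there) and addressing the converse direction that every eigenvalue produces an escaping crossing curve, both of which the paper leaves implicit.
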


Lastly, we show that $\mu(\ell(z), \xi^s, 0, z) = 0$. This shows that there are no crossings when $\lambda = 0$ and so there are no crossings for $\lambda \in (0, \infty).$ To see this observe that when $\lambda = 0$, equation (\ref{eq:odesys}) reduces to the equation of variations of the pendulum equation (\ref{eq:pend}). Thus the unstable manifold of equation (\ref{eq:odesys}) is just the tangent line to the heteroclinic orbit, joining $(-\pi,0)$ to $(\pi,0)$ in the phase plane of (\ref{eq:odesys}). This orbit in the phase plane is given by:
\begin{equation}\label{eq:hetero}
v_z =  \sqrt{2\frac{\cos v+1}{1-c^2}} 
\end{equation}
\noindent and the slope of the tangent line is given by 
\begin{equation}\label{eq:slope}
\frac{d v_z}{dv} = \frac{-\sin v}{\sqrt{(c^2-1)(2\cos v +2)}}.
\end{equation}
\noindent In order for a crossing to occur, the tangent line must be parallel to $\xi^s$. This means that
\begin{equation}
 \frac{-\sin v}{\sqrt{(c^2-1)(2\cos v +2)}} = \frac{-1}{\sqrt{-(c^2-1)}}
\end{equation}
\noindent But this only happens when $v = \pm \pi$. But $v = \pm \pi$ is a critical point in the phase plane. That is, there is no $z \in (-\infty, \infty)$ where this can happen, so there can be no crossings, so $\mu(\ell(z), \xi^s, 0, z) = 0$. This shows that there are no crossings for $\lambda \in (0, \infty)$. 

Lastly we note that the crossing form argument works in the negative $\lambda$ direction and that the asymptotic behaviour of the system in (\ref{eq:odesys}) for $\lambda \ll -1$ is the same as that for $\lambda \gg 1$. Thus we have no real eigenvalues $\lambda \neq 0$. 

\subsection{Complex Eigenvalues}
We now return our attention to equation (\ref{eq:mainode}) to show that there are no complex eigenvalues $\lambda = p+ i q$ where $p>0$. Without loss of generality we can assume that $q >0$ so $\lambda = r e^{i\theta}$ where $r>0$ and $ \theta \in (0, \frac{\pi}{2})$. 
Rewriting (\ref{eq:mainode}) as: 
\begin{equation}\label{eq:odemain}
w'' + \frac{2c\lambda}{c^2-1} w' + \frac{(\lambda^2-\cos(v(z)))}{c^2-1}w = 0, 
\end{equation}
\noindent we make the substitution 
\begin{equation}\label{eq:trick}
\psi = w e^{\big{(}\frac{c\lambda}{c^2-1} z\big{)}}
\end{equation}
\noindent and re-write (\ref{eq:odemain}) as 
\begin{equation}\label{eq:odesub}
\psi'' = \displaystyle \bigg{(} \frac{\cos v - \lambda^2}{c^2-1} + \frac{c^2\lambda^2}{(c^2-1)^2} \bigg{)} \psi
\end{equation}
We wish to consider now complex valued solutions to (\ref{eq:odesub}). We let $\eta = \frac{\psi'}{\psi}$ and consider the induced flow on a chart of $\C P^1$, where $\psi  \neq 0$:
\begin{equation}\label{eq:odecp1}
\eta ' = \frac{\psi''\psi - \psi' \psi'}{\psi^2} = \frac{\cos v - \lambda^2}{c^2-1} + \frac{c^2\lambda^2}{(c^2-1)^2} - \eta^2.
\end{equation}
\noindent If we set $\eta = \alpha + i\beta$, and consider the imaginary part of the vector field of (\ref{eq:odecp1}) on the real axis, that is those points where $\eta = \alpha$, we have 
\begin{equation}\label{eq:odeim}
\beta' \big{|}_{\beta = 0} = \frac{-2pq}{c^2-1} + \frac{2c^2pq}{(c^2-1)^2} = \frac{2pq}{(c^2-1)^2} > 0 \hspace{1in} \textrm{ if } p, q >0. 
\end{equation}
\noindent thus the flow of (\ref{eq:odecp1}) on the real axis of $\C P^1$ is always pointing in the positive imaginary direction. 

By considering $\frac{w_2}{w_1}$ from equation (\ref{eq:odesys}), we now interpret the eigenvalue conditions, for the eigenvalues that we are interested in.
\begin{lemma}\label{lem:eigenvalue}
An eigenvalue $\lambda$ is a value of $\lambda$ where there exists a heteroclinic orbit of the flow induced by (\ref{eq:odesys}), on (a 
chart of) $\C P^1$, i. e. $\lambda$ an eigenvalue means that there is an orbit from
$\displaystyle
\lim_{z \to -\infty} \frac{w_2}{w_1} = \gamma_u$ to $\displaystyle \gamma_s = \lim_{z \to \infty} \frac{w_2}{w_1} $. Under the transformation given in (\ref{eq:trick}) we have that 
\begin{equation}\label{eq:liouville-ricatti}
\eta = \frac{w_2}{w_1} + \frac{c\lambda}{c^2-1},\end{equation} 
and so $\gamma_u$ goes to $\eta_u$ and $\gamma_s$ goes to $\eta_s$ where 
\begin{eqnarray}
\eta_u & = & \frac{-\sqrt{\lambda^2 - (c^2-1)}}{c^2-1} \label{eq:etau} \textrm{ , and}\\
\eta_s & = & \frac{\sqrt{\lambda^2 - (c^2-1)}}{c^2-1} \label{eq:etas}.
\end{eqnarray}
\end{lemma}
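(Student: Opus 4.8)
The plan is to verify Lemma~\ref{lem:eigenvalue} by two essentially bookkeeping steps: first establish the Riccati substitution formula \eqref{eq:liouville-ricatti}, then push the known limiting eigenvectors of $A(\lambda)$ through it. Recall that an eigenvalue with $\mathrm{Re}\,\lambda>0$ forces $\varphi\to 0$ at $\pm\infty$, which by Lemma~\ref{lem:geom} is exactly the condition that the solution of \eqref{eq:odesys} emanates from $\xi^u$ at $-\infty$ and lands on $\xi^s$ at $+\infty$. In the chart $\eta=w_2/w_1$ of $\C P^1$ this says $w_2/w_1\to\gamma_u$ as $z\to-\infty$ and $w_2/w_1\to\gamma_s$ as $z\to+\infty$, with $\gamma_{u,s}$ the eigenvalues of $A(\lambda)$ from \eqref{eq:eigenvalueA} (equivalently, the slopes $w_2/w_1$ on the one-dimensional eigenlines, which from \eqref{eq:eigenvectorsA} are $(\lambda^2+1)/(-c\lambda\pm\sqrt{\lambda^2-(c^2-1)})$; a quick cross-check with \eqref{eq:eigenvalueA} shows these two expressions for each of $\gamma_{u,s}$ agree using $(c^2-1)\gamma^2+2c\lambda\gamma+(\lambda^2+1)=0$). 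This is the content of the first sentence of the lemma and requires only citing the earlier results.

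Next I would derive \eqref{eq:liouville-ricatti}. Since $\psi=w e^{(c\lambda/(c^2-1))z}$ from \eqref{eq:trick}, we have $\psi'=\bigl(w'+\tfrac{c\lambda}{c^2-1}w\bigr)e^{(c\lambda/(c^2-1))z}$, so
\begin{equation*}
\eta=\frac{\psi'}{\psi}=\frac{w'}{w}+\frac{c\lambda}{c^2-1}=\frac{w_2}{w_1}+\frac{c\lambda}{c^2-1},
\end{equation*}
using $w_1=\varphi=w$, $w_2=\varphi_z=w'$. Thus the map $w_2/w_1\mapsto\eta$ is just the rigid translation by $c\lambda/(c^2-1)$, which extends continuously to $\C P^1$ and in particular carries the limits $\gamma_{u}$ and $\gamma_s$ to well-defined points $\eta_u,\eta_s$. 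So the second assertion of the lemma — the existence of a heteroclinic of the $\eta$-flow \eqref{eq:odecp1} from $\eta_u$ to $\eta_s$ — is equivalent to the original eigenvalue condition, the equivalence being transported along this translation (the change of variable \eqref{eq:trick} multiplies solutions by a nonvanishing scalar, hence preserves the decay/growth structure at $\pm\infty$ and is a diffeomorphism between the two chart flows).

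Finally I would compute $\eta_u$ and $\eta_s$ explicitly. Plugging the formula for $\gamma_u$ from \eqref{eq:eigenvalueA} into \eqref{eq:liouville-ricatti} gives
\begin{equation*}
\eta_u=\frac{-c\lambda-\sqrt{\lambda^2-(c^2-1)}}{c^2-1}+\frac{c\lambda}{c^2-1}=\frac{-\sqrt{\lambda^2-(c^2-1)}}{c^2-1},
\end{equation*}
which is \eqref{eq:etau}, and identically the $\gamma_s$ computation yields $\eta_s=\sqrt{\lambda^2-(c^2-1)}/(c^2-1)$, which is \eqref{eq:etas}. (One can also arrive at these by noting directly that the constant-coefficient equation \eqref{eq:odesub} has $\psi\sim e^{\pm\sqrt{(\cos v-\lambda^2)/(c^2-1)+c^2\lambda^2/(c^2-1)^2}\,z}$ at $\pm\infty$ with $\cos v\to -1$, and simplifying the radical; this provides an independent verification that the translation was applied correctly.) None of this is genuinely hard; the only place to be careful is tracking branches of the square root $\sqrt{\lambda^2-(c^2-1)}$ consistently between \eqref{eq:eigenvalueA}, \eqref{eq:eigenvectorsA} and the conclusion — i.e., making sure that the branch used to label $\gamma_u$ as the \emph{unstable} eigenvalue (for $\lambda$ in the right half-plane, with $c^2-1<0$) is the same branch appearing in \eqref{eq:etau}. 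So the main obstacle, such as it is, is purely this consistency-of-conventions point rather than any substantive analytic difficulty.
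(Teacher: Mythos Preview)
Your proposal is correct; the paper in fact gives no separate proof of this lemma, treating it as an immediate consequence of Lemma~\ref{lem:geom} and the Liouville substitution \eqref{eq:trick}. Your argument simply fills in the routine bookkeeping (differentiating $\psi=we^{c\lambda z/(c^2-1)}$ and adding $c\lambda/(c^2-1)$ to $\gamma_{u,s}$) that the paper leaves implicit.
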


We claim that for $\lambda = p + iq$ with $p$ and $q$ both positive, that $\eta_u$ has a positive imaginary part and $\eta_s$ has a negative imaginary part. 
Thus as $\beta'\big{|}_{\beta = 0} >0$, from above, there can be no orbit joining the two critical points, and so no eigenvalues with positve real part. Thus all kink waves 
satisfying $c^2-1 <0$ are stable.  

To see that $\eta_u$ has a positive imaginary part, write $\lambda = r e^{i\theta}$ and $\eta_u = R_u e^{i \theta_u}$. Since we are assuming that $\lambda$ has positive imaginary part $\theta \in (0,\frac{\pi}{2})$. By equation (\ref{eq:etau}) we have that 
\begin{equation}\label{eq:thetau}
\theta_u = \arctan\bigg{(}\frac{r^2 \sin(2 \theta)}{r^2 \cos(2\theta) + (1-c^2)}\bigg{)}. 
\end{equation} 
\noindent Since $\theta_u$ will be the same for all points on the line $r e^{i\theta_u}$, and since $c^2 -1 < 0$, we can assume without loss of generality that $r^2 = (1-c^2)$. Thus substituting into equation (\ref{eq:thetau}) we have: 
\begin{equation}\label{eq:thetanice}
\theta_u = \arctan\bigg{(}\frac{ \sin(2 \theta)}{ \cos(2\theta) + 1}\bigg{)} = \arctan\bigg{(}\frac{ 2\sin{\theta}\cos{\theta}}{ \cos^2(\theta)-\sin^2(\theta) + 1}\bigg{)} = \theta.
\end{equation} 
\noindent The exact same calculation shows that $\eta_s$ has a negative imaginary part.

We have shown that there can be no homoclinic orbit on the chart of $\C P^1$ parametrized by $\frac{w_2}{w_1}$. To see 
that on there can be no such orbit on the other chart, we have that off the origin, the charts are transformed into 
each other via $\eta \to \frac{1}{\eta}$ for $\zeta \in \C \backslash 0 $. Thus $\eta_u \to \frac{1}{\eta_u}$ which will have a 
negative imaginary part. Likewise $\frac{1}{\eta_s}$ will have a positive imaginary part. If we write $\zeta = \frac{\psi}{\psi'} = \vartheta + i \varsigma$ and consider the flow on this chart induced by (\ref{eq:mainode})
\begin{equation}\label{eq:odecp2}
\zeta ' = \frac{(\psi')^2-\psi \psi''}{\psi'^2} = 1 - (\frac{\cos v - \lambda^2}{c^2-1} + \frac{c^2\lambda^2}{(c^2-1)^2}) \zeta^2.
\end{equation}
\noindent Now letting $\zeta = \sigma + i \tau$ and considering only the imaginary part of the flow on $\C$ given by (\ref{eq:odecp2}) restricted to where $\tau = 0$ we have that:

\begin{equation}\label{eq:odeim2}
\tau' \big{|}_{\tau = 0} = \frac{-2pqc^2\sigma^2}{(c^2-1)^2} - \frac{2pq\sigma^2}{c^2-1} = -\frac{2pq \sigma^2}{(c^2-1)^2} < 0 \hspace{1in} \textrm{ if } p, q >0. 
\end{equation}
Thus the flow of (\ref{eq:odecp1}) on the real axis of (this chart of) $\C P^1$ is always pointing in the negative imaginary direction, and so there can be no heteroclinic orbit connecting $\frac{1}{\eta_u}$ to $\frac{1}{\eta_s}$, and hence there are no eigenvalues $\lambda$ with positive real part to equation (\ref{eq:lin3}). 

The same argument can be run in for $\lambda = p + iq$, with $p$ and $q$ both negative to show that the flow on the real axis is pointing in the wrong direction to allow for an orbit joining $\eta^u$ to $\eta^s$ as well. Just as in the previous case, one makes a Liouville transformation and then tracks the location of the stable and unstable subspaces on both charts of $\C P^1$. Thus we conclude that there are no eigenvalues off the imaginary axis. 

\subsection{Superluminal kink-waves}

We now return our attention to the superluminal eigenvalue problem  equation (\ref{eq:lin3}):

\begin{equation}\label{eq:lin4} 
 \begin{matrix}   \psi \\ (\cos v) \varphi - (c^2-1)\varphi_{zz} - 2c\psi_z \end{matrix}  = \lambda \begin{pmatrix} \varphi \\ \psi \end{pmatrix} 
 \end{equation}

We are interested in the values of $\lambda \in \C$ for which there are solutions to (\ref{eq:lin4}) which decay to zero as $z \to \pm \infty$. We 
first investigate the limiting cases $\lim_{z \to \pm \infty}$, which becomes the constant coefficient ODE 
 \begin{equation}\label{eq:lin5}
 (1-c^2) \varphi'' - 2c\lambda \varphi - (\lambda^2+1) \varphi = 0
 \end{equation} 
\noindent which can easily be solved for any value of $\lambda \in \C$. The characteristic exponents $r_{1,2}$ are given by: 
\begin{equation}\label{eq:charexp}
r_{1,2} = \frac{c\lambda \pm \sqrt{\lambda^2+(1-c^2)}}{1-c^2}
\end{equation}
\noindent By taking limits as $c\to 1^{+}$, we obtain that the signs of the real parts of $r_{1,2}$ are equal. In fact they will be the opposite of the sign of the real part of $\lambda$. We have: 
$$ \textrm{sgn } \left( \Re (\lim_{c\to 1^+} r_1) \right) = -\textrm{sgn }(\Re(2\lambda)) \qquad \textrm{ and, } 
\textrm{sgn } \left( \Re (\lim_{c\to 1^+} r_2) \right) = -\textrm{sgn }(\Re(\lambda+ \frac{1}{\lambda}))$$

Moreover, by differentiating each in the $c$ variable, we obtain that the sign of the derivative of the characteristic exponents $\frac{\partial r_{1,2}}{\partial c}$ is the 
same as the sign of the real part of $\lambda$. Thus we can conclude that in the case of $c^2-1>0$, we have that for the asymptotic cases, the 
eigenvalues of $A(\lambda)$ have the same sign. This means that as $z \to -\infty$ there can be no unstable orbit of the construction in lemma \ref{lem:geom} and therefore there can be no heteroclinic connection to the stable orbit at $\infty$. 
Thus we conclude that there is no point spectrum when $c^2-1>0$ is positive. This concludes the proof of theorem \ref{th:main}

\bibliographystyle{amsalpha}
\bibliography{sgbiblio}

\end{document}